\begin{document}

\title{On a theorem of Shkredov}

\author{Tom Sanders}
\address{Department of Pure Mathematics and Mathematical Statistics\\
University of Cambridge\\
Wilberforce Road\\
Cambridge CB3 0WB\\
England } \email{t.sanders@dpmms.cam.ac.uk}

\begin{abstract}
We show that if $A$ is a finite subset of an abelian group with additive energy at least $c|A|^3$ then there is a set $\mathcal{L} \subset A$ with $|\mathcal{L}| = O(c^{-1}\log |A|)$ such that $|A \cap \Span(\mathcal{L})| = \Omega(c^{1/3}|A|)$.
\end{abstract}

\maketitle

\section{Introduction and notation}

We shall prove the following theorem which is a slight strengthening of
\cite[Theorem 1.5]{shk::3}.
\begin{theorem}\label{thm.shkredovtheorem}
Suppose that $G$ is an abelian group and $A \subset G$ is a finite
set with $\|1_A \ast 1_A\|_{\ell^2(G)}^2 \geq c|A|^3$. Then there
is a set $\mathcal{L} \subset A$ with $|\mathcal{L}| =O(c^{-1} \log
|A|)$ such that\footnote{Recall that $\Span(\mathcal{L})$ is the set of all sums $\sum_{x \in \mathcal{L}}{\sigma_x.x}$ where $\sigma \in \{-1,0,1\}^\mathcal{L}$.} $|A \cap \Span (\mathcal{L}) | =\Omega( c^{1/3}|A|)$.
\end{theorem}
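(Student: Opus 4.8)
The plan is to reduce the statement, via the elementary fact that a set lies in the $\Span$ of any of its maximal dissociated subsets, to a structural result about $A$. Indeed, if $D\subseteq X$ is dissociated and maximal with this property, then for each $x\in X\setminus D$ the set $D\cup\{x\}$ admits a nontrivial relation $\sigma_x x+\sum_{d\in D}\sigma_d d=0$ with $\sigma\in\{-1,0,1\}$; since $D$ is dissociated we must have $\sigma_x\neq 0$, so $x=-\sigma_x\sum_d\sigma_d d\in\Span(D)$, and hence $X\subseteq\Span(D)$. It therefore suffices to find $B\subseteq A$ with $|B|=\Omega(c^{1/2}|A|)$ such that every dissociated subset of $B$ has size $O(c^{-1}\log|A|)$, and then to take $\mathcal L$ to be a maximal dissociated subset of $B$. (We may also assume $c\geq|A|^{-1}$, as otherwise $\mathcal L=A$ already works.)

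To locate such a $B$ I would first unpack the hypothesis. With $r(x):=|A\cap(A+x)|=1_A\ast 1_{-A}(x)$ one has $\sum_x r(x)=|A|^2$ and $\sum_x r(x)^2=\|1_A\ast 1_A\|_{\ell^2}^2\geq c|A|^3$, so the popular-difference set $S:=\{x:r(x)\geq\tfrac{c}{2}|A|\}$ carries almost all of the second moment, $\sum_{x\in S}r(x)^2\geq\tfrac{c}{2}|A|^3$ and $\sum_{x\in S}r(x)\geq\tfrac{c}{2}|A|^2$, while $|S|\leq\tfrac{2}{c}|A|$; equivalently, at least $\tfrac{c}{2}|A|^2$ of the ordered pairs $(a,a')\in A\times A$ have $a-a'\in S$. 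Feeding this dense popular-difference structure into a Balog--Szemer\'edi--Gowers-type argument yields $A'\subseteq A$ of size $c^{O(1)}|A|$ with small doubling $|A'+A'|\leq c^{-O(1)}|A'|$, together with a covering statement to the effect that $A$ already meets a translate of the coset-progression hull of $A'$ in $\Omega(c^{1/2}|A|)$ points; it is this last refinement, exploiting that the hull is much larger than $A'$ itself, that should be responsible for the exponent $c^{1/2}$ rather than the crude $c^{O(1)}$ the bare subset $A'$ would give.

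The size of a maximal dissociated subset of $B:=A\cap(\text{this hull})$ is then bounded by a Chang-type structure theorem: a set of doubling $K$ lies in a coset progression of rank $O(K\log K)$ and size $\leq\exp(O(K\log K))|A|$, and a coset progression of rank $d$ and size $N$ lies in the $\Span$ of $O(d+\log N)$ elements, namely the generators of the progression, a generating set of the subgroup, and their dyadic dilates. Since $c\geq|A|^{-1}$ we have $\log(1/c)\leq\log|A|$, so a rank bound of shape $O(c^{-1}\log(1/c))$ is already $O(c^{-1}\log|A|)$; the crux is to obtain structure whose doubling, and hence rank, is genuinely of order $c^{-1}$, rather than the $c^{-2}$ (or worse) produced by naively combining Balog--Szemer\'edi--Gowers with Chang's theorem. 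I expect this to be the main difficulty, and that overcoming it needs either a sharp form of Balog--Szemer\'edi--Gowers, or a more direct appeal to Chang's spectral inequality applied to the large spectrum $\{\gamma:|\widehat{1_A}(\gamma)|\geq\rho|A|\}$ with $\rho^2\asymp c$ -- which the energy hypothesis shows carries most of the $\ell^4$-mass of $\widehat{1_A}$ -- since Rudin's inequality bounds every dissociated subset of that spectrum by $O(\rho^{-2}\log(|G|/|A|))=O(c^{-1}\log|A|)$, with the remaining, harder, task being to transfer this dual-side statement back to the physical side.

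Two last points of bookkeeping. A general abelian group is reduced to a finite one by Freiman-embedding $A$ into an abelian group of size polynomial in $|A|$, which only costs additive relations of bounded length and keeps $\log(|G|/|A|)=O(\log|A|)$. And the generators obtained above need not lie in $A$; but since $\Omega(c^{1/2}|A|)$ elements of $A$ already lie in the hull, each generator can be rewritten as a signed difference of two of them, at the cost of a constant factor in $|\mathcal L|$ and a little care -- since $\Span$ is not a group -- to prevent the $\{-1,0,1\}$-coefficients from colliding. Assembling these steps gives $\mathcal L\subseteq A$ with $|\mathcal L|=O(c^{-1}\log|A|)$ and $|A\cap\Span(\mathcal L)|\geq|B|=\Omega(c^{1/2}|A|)$, as required.
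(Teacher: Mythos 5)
Your opening reduction is exactly right, and is how the paper begins: it suffices to find a large $B\subseteq A$ all of whose dissociated subsets have size $O(c^{-1}\log|A|)$, and then take $\mathcal{L}$ to be a maximal dissociated subset of $B$ (which is then automatically a subset of $A$, so the difficulty in your last paragraph about generators not lying in $A$ never arises). The problem is that you do not actually produce such a $B$. The route you propose --- popular differences, Balog--Szemer\'edi--Gowers, Chang's theorem --- cannot give the stated bounds: BSG returns a subset of density $c^{O(1)}$ with doubling $c^{-O(1)}$ where the implied exponents exceed $1$, and Chang's theorem then gives rank $O(K\log K)$ in terms of that doubling $K$, so the final rank is a power of $c^{-1}$ strictly worse than $c^{-1}$. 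You flag this yourself as ``the main difficulty''; likewise the ``covering statement'' that $A$ meets the hull in $\Omega(c^{1/2}|A|)$ points is asserted rather than proved. These are precisely the two places where the content of the theorem lives, so the proposal as it stands is a plan with the proof missing.

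The actual argument sidesteps additive-structure machinery entirely and works with $L^p$ norms of $\widehat{1_A}$ for $p\approx\log|A|$. Fix $l$ and greedily remove dissociated subsets of size $l$ from $A$; after at most $|A|/l$ steps one reaches $A'\subseteq A$ with no dissociated subset of size exceeding $l$, and Rudin's inequality applied to each removed piece gives $\|\widehat{1_A}-\widehat{1_{A'}}\|_{L^p(\mu_{\widehat{G}})}=O(\sqrt{p/l}\,|A|)$ (this is Bourgain's lemma). With $p=2+\log|A|$, log-convexity of the $L^p$ norms together with $\|\widehat{1_A}\|_{L^4(\mu_{\widehat{G}})}^4\geq c|A|^3$ and Parseval gives the lower bound $\|\widehat{1_A}\|_{L^p(\mu_{\widehat{G}})}\geq c^{(p-2)/2p}|A|^{(p-1)/p}$, while H\"older, Parseval and Hausdorff--Young give $\|\widehat{1_{A'}}\|_{L^p(\mu_{\widehat{G}})}\leq|A'|^{(p-1)/p}$. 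Choosing $l=O(c^{-1}\log|A|)$ so that the removal error is at most half the lower bound, the triangle inequality forces $|A'|\geq c^{1/2}|A|/4$; note that $c^{(p-2)/2p}$ is comparable to $c^{1/2}$ and $|A|^{2/p}=O(1)$ for this $p$, which is exactly where the exponents $c^{-1}\log|A|$ and $c^{1/2}$ come from. If you want to salvage your spectral-side idea, this is the correct ``transfer back to the physical side'': one never passes through the large spectrum at all, but applies Rudin's inequality directly to dissociated subsets of $A$ itself.
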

It is immediate from the Cauchy-Schwarz inequality that if $|A+A| \leq K|A|$ then $\|1_A \ast 1_A\|_{\ell^2(G)}^2 \geq |A|^3/K$ whence the conclusion of the above result applies to $A$. This was noted by Shkredov in \cite[Corollary 3.2]{shk::3}, however, something slightly stronger is also true.\footnote{Since writing this note it has come to the author's attention (personal communication) that Shkredov has also independently proved Theorem \ref{thm.sumsetshkredov}.}
\begin{theorem}\label{thm.sumsetshkredov}
Suppose that $G$ is an abelian group and $A \subset G$ is a finite
set with $|A+A| \leq K|A|$. Then there is a set $\mathcal{L} \subset
A$ with $|\mathcal{L}| =O(K \log |A|)$ such that $A \subset \Span (\mathcal{L})$.
\end{theorem}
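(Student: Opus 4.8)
The plan is to construct $\mathcal{L}$ greedily over $O(\log|A|)$ rounds, adjoining $O(K)$ elements of $A$ per round, so that the accumulated span eventually engulfs all of $A$. The basic tools I would invoke are Ruzsa's covering lemma (if $|X+Y|\leq L|Y|$ then $X\subseteq T+Y-Y$ for some $T\subseteq X$ with $|T|\leq L$), the Plünnecke--Ruzsa inequality $|nA-mA|\leq K^{n+m}|A|$, the identity $\Span(\mathcal{L}_1\cup\mathcal{L}_2)=\Span(\mathcal{L}_1)+\Span(\mathcal{L}_2)$ for disjoint $\mathcal{L}_1,\mathcal{L}_2$ (so that spans of disjoint pieces combine additively), and the trivial remark that $a-a'\in\Span(\{a,a'\})$, which lets one realise any difference of two elements of $A$ inside the span of a two-element subset of $A$.

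The step to isolate is a \emph{peeling lemma}: if $B$ is a finite set with $|B+B|\leq K|B|$ and $|B|$ exceeds a fixed multiple of $K$, then there are $T\subseteq B$ with $|T|=O(K)$ and a set $B'$ which again has doubling $O(K)$, with $|B'|\leq|B|/2$ and $B\subseteq\Span(T)+B'$. Granting this, I would apply it with $B=A$, then with $B$ replaced by $B'$, and so on; after $O(\log|A|)$ rounds the current set has size $O(K)$ and is adjoined to $\mathcal{L}$ wholesale. By the additivity of $\Span$ over disjoint unions one gets $A\subseteq\Span\!\big(\bigcup_i T_i\big)$ with $\big|\bigcup_i T_i\big|=O(K\log|A|)$; disjointness of the $T_i$ is arranged by discarding at each round the $O(K)$ elements just chosen, which is harmless as long as the current set is large. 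To prove the peeling lemma I would feed Ruzsa's covering lemma a carefully chosen set $C$ — something like a progression along a well-chosen ``direction'' $x$ — so that $B\subseteq T+(C-C)$ with $|T|=O(K)$, with $C-C$ absorbed into the span of the dyadic copies $x,2x,4x,\dots$, and these in turn realised inside the span of $O(\log|A|)$ elements of $B$ (say as differences $b-b'$); then $B'$ is the part of $B$ lying on one side of a bisection along $x$.

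The main obstacle, and the only real content, is keeping the doubling of the residual set $B'$ bounded across the iteration. Taking $B'=B\setminus\Span(T)$ will not do: although $|B'+B'|\leq|B+B|\leq K|B|$, once $|B'|$ is much smaller than $|B|$ the ratio $|B'+B'|/|B'|$ can be as large as $K|B|/|B'|$, and then after $\log|A|$ rounds the per-round cost, which is proportional to the current doubling, would have grown to $O(|A|)$. So $B'$ must be \emph{structurally} half of $B$ — its sumset genuinely about half that of $B+B$ — rather than merely a dense half; this is exactly the point at which the sumset hypothesis is used, rather than the weaker energy hypothesis of Theorem \ref{thm.shkredovtheorem}, which on its own would only survive a bounded number of such peelings. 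Pinning down the correct notion of a bisecting ``direction'' for an arbitrary small-doubling set, so that one genuinely gains a factor $2$ in size while the doubling stays bounded and the chosen elements stay inside $A$ (up to the harmless factor-$2$ loss from passing through $B-B$), and checking via Plünnecke--Ruzsa that the relevant iterated sumsets do not blow up, is where the argument has to work and where I would expect to spend most of the effort.
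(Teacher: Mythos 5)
Your proposal is a plan rather than a proof, and it has a genuine gap exactly where you yourself locate it: the peeling lemma is never established, and I do not believe it can be established in the form you need. The lemma asks, for an arbitrary set $B$ in an arbitrary abelian group with $|B+B|\leq K|B|$, for a subset $T\subseteq B$ of size $O(K)$ and a ``structural half'' $B'\subseteq B$ with $|B'|\leq |B|/2$, doubling still $O(K)$, and $B\subseteq\Span(T)+B'$. The mechanism you propose --- a ``direction'' $x$, a bisection of $B$ along $x$, and the dyadic multiples $x,2x,4x,\dots$ --- presupposes that $B$ already looks like a progression; extracting such a direction from the doubling hypothesis alone is a Freiman-type structural statement whose known proofs cost far more than $O(K)$ elements per step, and in a general abelian group (one with torsion, say, where the multiples $2^jx$ may cycle or vanish) it is unclear what the bisection even means. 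Note also that $\Span$ permits only coefficients in $\{-1,0,1\}$, so $2^jx\notin\Span(\{x\})$ in general: you need a fresh spanning element for each dyadic scale, each realised as a difference $b-b'$ with $b,b'\in B$, which requires $B-B$ to contain every $2^jx$ --- another unjustified structural assumption. Even granting all of this, the $O(\log|A|)$ auxiliary elements per round, over $O(\log|A|)$ rounds, contribute $O(\log^2|A|)$ on top of $O(K\log|A|)$, which exceeds the claimed bound when $K$ is small. Your diagnosis of why $B'=B\setminus\Span(T)$ fails is correct, but the proposed repair is precisely the unproved content.

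The paper avoids all of this by not constructing $\mathcal{L}$ iteratively at all: it takes $\mathcal{L}$ to be any maximal dissociated subset of $A$, so that $A\subseteq\Span(\mathcal{L})$ is immediate from Lemma \ref{lem.maximaldissociated}, and the entire content of the theorem is the bound $|\mathcal{L}|=O(K\log|A|)$. That bound is obtained by harmonic analysis: setting $f:=1_{A+A}\ast 1_{-A}$, one has $f\geq |A|$ on $A\supseteq\mathcal{L}$, one bounds $\|\wh{f}\|_{L^{p'}(\mu_{\wh{G}})}$ via Parseval, H\"{o}lder and the doubling hypothesis, and one plays this off against Rudin's inequality on the dissociated set $\mathcal{L}$ with $p=2+\log|A|$. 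If you wish to pursue a covering-lemma route, the natural target is Chang's covering lemma, but that only yields $A\subseteq\Span(T)+A-A$ rather than $A\subseteq\Span(T)$, and closing that remaining gap is again the hard part; some use of dissociativity (or an equivalent) seems unavoidable here.
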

Before we begin with our proofs it will be useful to recall some well-known tools; Rudin \cite{rud::1} is the classic reference for these.

A subset $\mathcal{L}$ of an abelian group $G$ is said to be \emph{dissociated} if
\begin{equation*}
\sum_{x \in \mathcal{L}}{\sigma_x.x} = 0_G \textrm{ and } \sigma \in \{-1,0,1\}^\mathcal{L}
\textrm{ implies that } \sigma \equiv 0.
\end{equation*}
Algebraically, dissociativity is particularly useful in view of the following easy lemma.
\begin{lemma}\label{lem.maximaldissociated}
Suppose that $G$ is an abelian group and $A \subset G$ is finite. If
$\mathcal{L} \subset A$ is a maximal dissociated subset of $A$ then
$A \subset \Span (\mathcal{L})$.
\end{lemma}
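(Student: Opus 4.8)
The plan is to argue directly from the maximality hypothesis, element by element. Fix $a \in A$; we want to show $a \in \Span(\mathcal{L})$. If $a \in \mathcal{L}$ this is immediate, since taking $\sigma_a = 1$ and $\sigma_x = 0$ for $x \neq a$ exhibits $a$ as an element of $\Span(\mathcal{L})$. So the only case requiring work is $a \in A \setminus \mathcal{L}$.

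In that case, consider the set $\mathcal{L} \cup \{a\}$, which is a subset of $A$ strictly containing $\mathcal{L}$. By maximality of $\mathcal{L}$ among dissociated subsets of $A$, the set $\mathcal{L} \cup \{a\}$ fails to be dissociated, so there is some $\sigma \in \{-1,0,1\}^{\mathcal{L} \cup \{a\}}$ with $\sigma \not\equiv 0$ and $\sum_{x \in \mathcal{L} \cup \{a\}} \sigma_x . x = 0_G$. The key observation is that we must have $\sigma_a \neq 0$: otherwise the relation would only involve elements of $\mathcal{L}$ and would give a nontrivial dissociativity relation within $\mathcal{L}$ itself, contradicting the assumption that $\mathcal{L}$ is dissociated. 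Hence $\sigma_a \in \{-1,1\}$.

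Rearranging the relation gives $a = -\sigma_a \sum_{x \in \mathcal{L}} \sigma_x . x$. Since $\sigma_a \in \{-1,1\}$, the coefficients $-\sigma_a \sigma_x$ again lie in $\{-1,0,1\}$, so this expresses $a$ as an element of $\Span(\mathcal{L})$. As $a \in A$ was arbitrary, $A \subset \Span(\mathcal{L})$.

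There is really no substantive obstacle here: the statement is essentially a restatement of maximality together with the symmetry of the coefficient set $\{-1,0,1\}$ under negation. The only point to be careful about is the verification that the coefficient of $a$ in the obstructing relation is nonzero, which is exactly where the dissociativity of $\mathcal{L}$ (as opposed to merely $\mathcal{L} \cup \{a\}$ being non-dissociated) gets used.
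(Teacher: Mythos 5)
Your proof is correct and is the standard argument for this fact; the paper itself states the lemma without proof (calling it "easy"), and what you have written is exactly the argument the author intends the reader to supply. The one point that genuinely needs checking --- that the coefficient $\sigma_a$ in the obstructing relation is nonzero, which uses the dissociativity of $\mathcal{L}$ itself --- is the point you correctly isolate and verify.
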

Analytically, dissociativity can be handled very effectively using the Fourier transform which we take a moment to introduce.

Suppose that $G$ is a (discrete) abelian group. We write $\wh{G}$
for the dual group, that is the compact abelian group of
homomorphisms from $G$ to $S^1:=\{z \in \C: |z|=1\}$ endowed with the Haar probability measure $\mu_{\wh{G}}$, and define the Fourier transform of a function $f \in \ell^1(G)$ to be
\begin{equation*}
\wh{f}:\wh{G} \rightarrow \C; \gamma \mapsto \sum_{x \in
G}{f(x)\overline{\gamma(x)}}.
\end{equation*}
The following result is a key tool in harmonic analysis.
\begin{proposition}[Rudin's inequality]
Suppose that $G$ is an abelian group and $\mathcal{L}
\subset G$ is a dissociated set. Then, for each $p \in [2,\infty)$
we have
\begin{equation*}
\|\wh{f}\|_{L^p(\mu_{\wh{G}})} =O(\sqrt{p}
\|f\|_{\ell^2(\mathcal{L})}) \textrm{ for all } f \in
\ell^2(\mathcal{L}).
\end{equation*}
\end{proposition}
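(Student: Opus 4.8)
The plan is to reduce the inequality, by a Riesz-product argument, to the analogous estimate for a genuinely independent system, and then to invoke Khintchine's inequality; dissociativity is precisely what powers the reduction. The one consequence of dissociativity I would isolate at the outset is the following: for $c \in \C^{\mathcal{L}}$ with $\|c\|_{\ell^\infty(\mathcal{L})}$ sufficiently small, the \emph{Riesz product}
\begin{equation*}
R_c(\gamma) := \prod_{x \in \mathcal{L}}\bigl(1 + \mathrm{Re}(c_x\gamma(x))\bigr)
\end{equation*}
is a non-negative function on $\wh{G}$, and on expanding the product every summand is a scalar multiple of $\gamma(\sum_{x \in S}\sigma_x x)$ for some $S \subseteq \mathcal{L}$ and $\sigma \in \{-1,1\}^S$. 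Since $\int_{\wh{G}}\gamma(z)\,d\mu_{\wh{G}}(\gamma) = 1_{z = 0_G}$, dissociativity forces every summand with $S \neq \emptyset$ to have integral $0$, so that $\int_{\wh{G}}R_c\,d\mu_{\wh{G}} = 1$; finer versions of the same computation locate the low-order Fourier coefficients of $R_c$ and of its powers.

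Testing Riesz products of this kind against $\wh{f}$ and against its powers, I would next prove the \emph{contraction principle}: there is an absolute constant $C$ such that for every finitely supported $f \colon \mathcal{L} \to \C$ and every $p \geq 1$,
\begin{equation*}
\Bigl\|\sum_{x \in \mathcal{L}}f(x)\gamma(x)\Bigr\|_{L^p(\mu_{\wh{G}})} \leq C\,\Bigl\|\sum_{x \in \mathcal{L}}f(x)\omega_x\Bigr\|_{L^p},
\end{equation*}
where $(\omega_x)_{x \in \mathcal{L}}$ is an independent family with $\omega_x$ uniform on $\{-1,1\}$ when $x$ has order $2$ and uniform on the unit circle otherwise. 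Granting this, and writing $\wh{f}(\gamma) = \overline{\sum_{x}\overline{f(x)}\gamma(x)}$, we get
\begin{equation*}
\|\wh{f}\|_{L^p(\mu_{\wh{G}})} = \Bigl\|\sum_{x}\overline{f(x)}\gamma(x)\Bigr\|_{L^p(\mu_{\wh{G}})} \leq C\,\Bigl\|\sum_{x}\overline{f(x)}\omega_x\Bigr\|_{L^p} = O\bigl(\sqrt{p}\,\|f\|_{\ell^2(\mathcal{L})}\bigr),
\end{equation*}
the last step being Khintchine's inequality for Steinhaus and Rademacher variables -- a classical fact, provable for instance by an even-moment computation in which, the $\omega_x$ being genuinely independent, only a manageable family of tuples contributes. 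As $p \geq 1$ is arbitrary, this is the proposition.

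The step I expect to be the real obstacle is establishing the contraction principle with $C$ independent of $|\mathcal{L}|$. The crude bound $\|R_c\|_{L^\infty(\mu_{\wh{G}})} \leq \prod_{x}(1 + |c_x|)$ is worthless here, so instead one estimates $\|R_c\|_{L^{p'}(\mu_{\wh{G}})}$ by expanding $R_c^{p'}$, bounding each factor by its quadratic Taylor polynomial, integrating, and appealing once more to dissociativity to kill the oscillatory terms, so that only a tame factor $\exp(O(\|c\|_{\ell^2(\mathcal{L})}^2))$ survives. Tangled up with this is the torsion bookkeeping: if a small multiple $2x, 3x, \dots$ of some $x \in \mathcal{L}$ lies near $0_G$, then $\gamma(x)^2$ is not ``new'' and its contributions can reinforce the linear terms coming from other elements of $\mathcal{L}$, so one must split $\mathcal{L}$ according to the orders of its elements and run the matching Rademacher- or Steinhaus-type argument on each part. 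All of this is classical, and is carried out in detail in \cite{WR}.
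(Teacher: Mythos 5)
The paper does not prove this proposition; it is quoted as a standard fact with Rudin \cite{WR} as the reference, and your Riesz-product argument --- non-negativity plus dissociativity killing all non-trivial frequencies, a contraction principle comparing $\gamma(x)$ to independent Rademacher/Steinhaus variables, and Khintchine's inequality to extract the $\sqrt{p}$ --- is exactly the classical proof found there. Your outline is correct, with the only caveat that the genuinely hard step (the contraction principle with an absolute constant, including the order-two bookkeeping) is flagged and deferred to the literature rather than carried out, which is entirely in the spirit of how the paper itself treats the result.
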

The proof may be found in many places (e.g. \cite{rud::1}) and proceeds for even integral values of $p$ (from which the general result follows immediately) where one may apply Parseval's theorem to get a physical space expression which counts additive relations; dissociativity tells us that there are few of these and so the norm is small.

\section{The proof of Theorem \ref{thm.shkredovtheorem}}

Our proof of Theorem \ref{thm.shkredovtheorem} is guided by Shkredov \cite{shk::3} although we are able to make some  simplifications and improvements by using some standard facts about the $L^p(\mu_{\wh{G}})$-norms.  

We require the following lemma which is implicit in the paper \cite{bou::4} of Bourgain.

\begin{lemma}\label{lem.bourgain}
Suppose that $G$ is a abelian group, $A \subset G$ is finite, $l$ is a positive integer and $p \geq 2$. Then there is a set $A' \subset A$ such that all dissociated subsets of $A'$ have size at most $l$ and
\begin{equation*}
\|\wh{1_A}-\wh{1_{A'}}\|_{L^p(\mu_{\wh{G}})} =  O(\sqrt{p/l}|A|).
\end{equation*}
\end{lemma}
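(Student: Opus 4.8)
The plan is to build $A'$ greedily by repeatedly stripping off large dissociated subsets until none of size exceeding $l$ remain. Precisely, set $A_0 = A$; given $A_j$, if $A_j$ contains a dissociated set of size $> l$ then pick such a set $D_j \subset A_j$ with $|D_j| = l+1$ (or any convenient size $> l/2$ will do) and put $A_{j+1} = A_j \setminus D_j$; otherwise stop and set $A' = A_j$. Since each step removes at least one element the process terminates, and by construction every dissociated subset of $A'$ has size at most $l$. The content of the lemma is the bound on $\|\wh{1_A} - \wh{1_{A'}}\|_{L^p}$, and since $1_A - 1_{A'} = \sum_j 1_{D_j}$ with the $D_j$ pairwise disjoint subsets of $A$, the triangle inequality in $L^p(\mu_{\wh{G}})$ gives
\begin{equation*}
\|\wh{1_A} - \wh{1_{A'}}\|_{L^p(\mu_{\wh{G}})} \leq \sum_j \|\wh{1_{D_j}}\|_{L^p(\mu_{\wh{G}})}.
\end{equation*}

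Now each $D_j$ is dissociated, so Rudin's inequality applies: $\|\wh{1_{D_j}}\|_{L^p(\mu_{\wh{G}})} = O(\sqrt{p}\,\|1_{D_j}\|_{\ell^2}) = O(\sqrt{p}\,|D_j|^{1/2})$. Thus the right-hand side is $O(\sqrt{p} \sum_j |D_j|^{1/2})$. To control this I would use that each $|D_j| \geq l$ (we can arrange $|D_j| = l$ exactly, say, assuming without loss $l \leq |A|$), so $|D_j|^{1/2} = |D_j|/|D_j|^{1/2} \leq |D_j|/\sqrt{l}$, and hence $\sum_j |D_j|^{1/2} \leq l^{-1/2} \sum_j |D_j| = l^{-1/2}\, |A \setminus A'| \leq l^{-1/2}|A|$. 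This gives exactly $\|\wh{1_A} - \wh{1_{A'}}\|_{L^p(\mu_{\wh{G}})} = O(\sqrt{p/l}\,|A|)$, as required. (If $l > |A|$ then one simply takes $A' = A$: there are no dissociated sets of size $> |A| \geq l$, wait — one needs $l \geq |A|$; in any case the trivial bound $\|\wh{1_A}-\wh{1_{A'}}\|_{L^p} = 0$ holds, or the stated estimate is vacuous.)

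The only genuinely delicate point is making sure the pieces $D_j$ can be taken of a uniform size that is $\Theta(l)$, so that the elementary inequality $\sum |D_j|^{1/2} \leq l^{-1/2}\sum|D_j|$ can be applied; this is why I extract dissociated sets of size exactly $l+1$ (or $l$) rather than maximal ones — a maximal dissociated subset of $A_j$ could be small, which would ruin the square-root saving. Everything else is routine: the greedy termination, the disjointness of the $D_j$, the triangle inequality, and the invocation of Rudin's inequality. I expect the write-up to be short; the constant implicit in the $O(\cdot)$ is whatever Rudin's inequality supplies.
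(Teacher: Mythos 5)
Your proposal is correct and follows essentially the same route as the paper: greedily strip off dissociated subsets of size comparable to $l$, apply the triangle inequality and Rudin's inequality to each piece, and use disjointness together with the lower bound $|D_j| \geq l$ to obtain the $\sqrt{p/l}\,|A|$ estimate. The paper removes sets of size exactly $l$ and bounds the number of steps by $|A|/l$, which is the same accounting as your inequality $\sum_j |D_j|^{1/2} \leq l^{-1/2}\sum_j |D_j|$.
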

\begin{proof}
We define sets $A_0 \supset A_1 \supset \dots \supset A_s$ and $\mathcal{L}_0,\mathcal{L}_1,\dots,\mathcal{L}_s$ iteratively starting with $A_0:=A$. Suppose that we have defined $A_i$.
\begin{enumerate}
\item If there is no dissociated subset of $A_i$ with size $l$ then
terminate the iteration;
\item if there is a dissociated subset of $A_i$ with size $l$ then let
$\mathcal{L}_i$ be any such set and put $A_{i+1} = A_i \setminus
\mathcal{L}_i$.
\end{enumerate}
The algorithm terminates at some stage $s$ with $s \leq |A|/l$ since $|A_{i+1}| =|A_i| -l$. Write $A':=A_s$ which consequently has no dissociated subset of size greater than $l$.

Since $A$ is the disjoint union of the sets $\mathcal{L}_0,\dots,\mathcal{L}_{s-1}$ and $A'$, and the Fourier transform is linear we have
\begin{equation*}
\|\wh{1_A}-\wh{1_{A'}}\|_{L^p(\mu_{\wh{G}})} = \|\sum_{i=0}^{s-1}{\wh{1_{\mathcal{L}_i}}}\|_{L^p(\mu_{\wh{G}})} \leq \sum_{i=0}^{s-1}{\|\wh{1_{\mathcal{L}_i}}\|_{L^p(\mu_{\wh{G}})}}.
\end{equation*}
Now each summand is $O(\sqrt{p}\|1_{\mathcal{L}_i}\|_{\ell^2(\mathcal{L}_i)}) = O(\sqrt{pl})$, by Rudin's inequality, whence
\begin{equation*}
\|\wh{1_A}-\wh{1_{A'}}\|_{L^p(\mu_{\wh{G}})} = O(s\sqrt{pl}) = O(\sqrt{p/l}|A|),
\end{equation*}
in view of the upper bound on $s$.
\end{proof}

\begin{proof}[Proof of Theorem \ref{thm.shkredovtheorem}]
Write $p:=2+\log |A|$ and let $l$ be an integer with
\begin{equation*}
l=O(p c^{-(p-2)/p}|A|^{2/p}) = O(c^{-1} \log |A|)
\end{equation*}
such that when we apply Lemma \ref{lem.bourgain} to $A$ we get a set $A' \subset A$ for which
\begin{equation}\label{eqn.lemmaerror}
\|\wh{1_A} - \wh{1_{A'}}\|_{L^p(\mu_{\wh{G}})} \leq c^{(p-2)/2p}|A|^{(p-1)/p}/4.
\end{equation}
Let $\mathcal{L}$ be a maximal dissociated subset of $A'$. We have $|\mathcal{L}| \leq l= O(c^{-1}\log |A|)$ by the choice of $l$, and $A' \subset \Span (\mathcal{L})$ by Lemma \ref{lem.maximaldissociated}, whence $|A \cap \Span (\mathcal{L})| \geq |A'|$ and the result will follow from a lower bound on $|A'|$.

By the $\log$-convexity of the $L^p(\mu_{\wh{G}})$ norms we have
\begin{eqnarray*}
\|\wh{1_A}-\wh{1_{A'}}\|_{L^4(\mu_{\wh{G}})}^4 & \leq &
\|\wh{1_A}-\wh{1_{A'}}\|_{L^2(\mu_{\wh{G}})}^{(2p-8)/(p-2)}\|\wh{1_A}-\wh{1_{A'}}\|_{L^p(\mu_{\wh{G}})}^{2p/(p-2)}\\ & = & \|1_{A \setminus A'}\|_{L^2(\mu_{\wh{G}})}^{(2p-8)/(p-2)}\|\wh{1_A}-\wh{1_{A'}}\|_{L^p(\mu_{\wh{G}})}^{2p/(p-2)}\\& \leq &  |A|^{(p-4)/(p-2)}\|\wh{1_A}-\wh{1_{A'}}\|_{L^p(\mu_{\wh{G}})}^{2p/(p-2)},
\end{eqnarray*}
by Parseval's theorem and the fact that $A' \subset A$. Now, inserting the bound in (\ref{eqn.lemmaerror}) we get that
\begin{equation*}
\|\wh{1_A}-\wh{1_{A'}}\|_{L^4(\mu_{\wh{G}})}^4 \leq |A|^{(p-4)/(p-2)}. c |A|^{(2p-4)/(p-2)}/2^{4p/(p-2)} \leq \|\wh{1_A}\|_{L^4(\mu_{\wh{G}})}^4/2^4.
\end{equation*}
On the other hand, by the triangle inequality,
\begin{equation*}
\|\wh{1_A}-\wh{1_{A'}}\|_{L^4(\mu_{\wh{G}})} \geq \|\wh{1_A}\|_{L^4(\mu_{\wh{G}})}- \|\wh{1_{A'}}\|_{L^4(\mu_{\wh{G}})}
\end{equation*}
whence, on combination with the previous, we have
\begin{equation}\label{eqn.lwr}
\|1_{A'}\|_{L^4(\mu_{\wh{G}})}^4 \geq \|1_A\|_{L^4(\mu_{\wh{G}})}^4/2^4 \geq c|A|^3/2^4.
\end{equation}
Finally, we note that
\begin{equation*}
\|\wh{1_{A'}}\|_{L^4(\mu_{\wh{G}})}^4 \leq \|\wh{1_{A'}}\|_{L^2(\mu_{\wh{G}})}^2\|\wh{1_{A'}}\|_{L^\infty(\mu_{\wh{G}})}^2 \leq |A'|^3,
\end{equation*}
by H{\"o}lder's inequality, Parseval's theorem and the Hausdorff-Young inequality.  The result follows on taking cube roots.
\end{proof}

\section{The proof of Theorem \ref{thm.sumsetshkredov}}

The proof is essentially Theorem 6.10 of L{\'o}pez and Ross
\cite{lopros::} coupled with Lemma \ref{lem.maximaldissociated}.
\begin{proof}[Proof of Theorem \ref{thm.sumsetshkredov}]
Write $f:=1_{A+A}\ast 1_{-A}$. Then
\begin{eqnarray*}
\|\wh{f}\|_{L^1(\mu_{\wh{G}})} & = &
\int{|\wh{1_{A+A}}(\gamma)\wh{1_{-A}}(\gamma)|d\mu_{\wh{G}}(\gamma)}\\
& \leq &
\left(\int{|\wh{1_{A+A}}(\gamma)|^2d\mu_{\wh{G}}(\gamma)}\right)^{1/2}\left(\int{|\wh{1_{-A}}(\gamma)|^2d\mu_{\wh{G}}(\gamma)}\right)^{1/2}\\
& = & \sqrt{|A+A||-A|}\leq \sqrt{K}|A|,
\end{eqnarray*}
by the Cauchy-Schwarz inequality, Parseval's theorem and the
doubling condition $|A+A| \leq K|A|$. Furthermore
$\|f\|_{\ell^\infty(G)} \leq |A|$ and $\|f\|_{\ell^1(G)} = |A||A+A|$
and so
\begin{equation*}
\|\wh{f}\|_{L^2(\mu_{\wh{G}})}^2 = \|f\|_{\ell^2(G)}^2 \leq
\|f\|_{\ell^\infty(G)}.\|f\|_{\ell^1(G)} \leq |A|^2|A+A| \leq
K|A|^3,
\end{equation*}
by Parseval's theorem, H\"{o}lder's inequality and the doubling
condition. Whence, by $\log$-convexity of the $L^{p'}(\mu_{\wh{G}})$
norms, we have
\begin{equation*}
\|\wh{f}\|_{L^{p'}(\mu_{\wh{G}})} \leq \sqrt{K}|A|^{2-1/p'}
\textrm{ for all } p' \in [1,2].
\end{equation*}

Suppose that $\mathcal{L}$ is a maximal dissociated subset of $A$
and $(p,p')$ is a conjugate pair of exponents with $p' \in (1,2]$.
Then, by Rudin's inequality, we have
\begin{eqnarray*}
\|f\|_{\ell^2(\mathcal{L})}^2 = \langle
\wh{f1_\mathcal{L}},\wh{f}\rangle_{L^2(\mu_{\wh{G}})} & \leq &
\|\wh{f1_{\mathcal{L}}}\|_{L^p(\mu_{\wh{G}})}\|\wh{f}\|_{L^{p'}(\mu_{\wh{G}})}\\
&= & O(
\sqrt{p}.\|f\|_{\ell^2(\mathcal{L})}\|\wh{f}\|_{L^{p'}(\mu_{\wh{G}})}).
\end{eqnarray*}
The construction of $f$ ensures that for any $a \in A$, $f(a) =
1_{A+A} \ast 1_{-A}(a) \geq |A|$ so, canceling
$\|f\|_{\ell^2(\mathcal{L})}$ above we get
\begin{equation*}
\sqrt{|\mathcal{L}|.|A|^2} \leq \|f\|_{\ell^2(\mathcal{L})} =
O(\sqrt{p}\|\wh{f}\|_{L^{p'}(\mu_{\wh{G}})}) =
O(\sqrt{pK}|A|^{2-1/p'}).
\end{equation*}
Putting $p=2+\log |A|$ and some rearrangement tells us that
$|\mathcal{L}| =O(K \log |A|)$. Since $\mathcal{L}$ was maximal
Lemma \ref{lem.maximaldissociated} then yields the result.
\end{proof}

\section{Concluding remarks}

In some ways the results are close to best possible.  In Theorem \ref{thm.sumsetshkredov} suppose that $A$ is the union of $K$ highly dissociated points and a long arithmetic progression (or subgroup if $G$ has a lot of torsion).  It is easy to see that in this case if $\mathcal{L}$ is such that $A \subset \Span(\mathcal{L})$ then $|\mathcal{L}| = \Omega(K + \log |A|)$.  If $K$ is around $\log^{O(1)}|A|$ then this is close to the upper bound in the theorem; if $K=\log^{o(1)}|A|$ then there are better results known: this is the celebrated Green-Ruzsa-Fre{\u\i}man theorem \cite{greruz::0}.  

In Theorem \ref{thm.shkredovtheorem} the $c^{1/3}$ cannot be improved: consider an arithmetic progression (or, again, subgroup if $G$ has a lot of torsion) of length $c^{1/3}|A|$ unioned with $|A|$ dissociated points.  This satisfies the lower bound on the energy but one cannot hope to find any structure other than the progression, \emph{i.e.} in more than a proportion $\Omega(c^{1/3})$ of the set.  Of course, this bound is not the important bound in the result; the bound on $\mathcal{L}$ is what is really of interest.

\bibliographystyle{alpha}

\bibliography{references}

\end{document}